\newcommand\footnoteref[1]{\protected@xdef\@thefnmark{\ref{#1}}\@footnotemark}
\definecolor{grey}{rgb}{0.95,0.95,0.95}
\definecolor{green}{rgb}{0.2,0.6,0.4}
\newcommand{\imp}{\rightarrow}
\newcommand{\Pb}{\mathbb{P}}
\newcommand{\Qb}{\mathbb{Q}}
\newcommand{\Psf}{\mathsf{P}}
\newcommand{\Qsf}{\mathsf{Q}}
\newcommand{\Gcal}{\mathcal{G}}
\newcommand{\Ical}{\mathcal{I}}
\newcommand{\Mcal}{\mathcal{M}}
\newcommand{\Rcal}{\mathcal{R}}
\newcommand{\Scal}{\mathcal{S}}
\newcommand{\Kcal}{\mathcal{K}}
\newcommand{\Tcal}{\mathcal{T}}
\newcommand{\Wcal}{\mathcal{W}}
\renewcommand{\setminus}{\smallsetminus}
\newcommand{\Odd}{\mathrm{Odd}}
\newcommand{\set}[1]{\left\{ #1 \right\}}
\newcommand{\card}[1]{\left| #1 \right|}
\newcommand{\s}[1]{\ensuremath{\sf{#1}}}
\newcommand{\dnrs}[1]{#1\mbox{-}\s{DNR}}
\DeclareMathOperator{\rca}{\s{RCA}_0}
\DeclareMathOperator{\wkl}{\s{WKL}_0}
\DeclareMathOperator{\wwkl}{\s{WWKL}_0}
\DeclareMathOperator{\dnr}{\s{DNR}}
\DeclareMathOperator{\rkl}{\s{RKL}}
\DeclareMathOperator{\rwkl}{\s{RWKL}}
\DeclareMathOperator{\rwwkl}{\s{RWWKL}}
\DeclareMathOperator{\rt}{\s{RT}}
\DeclareMathOperator{\srt}{\s{SRT}}
\DeclareMathOperator{\rrt}{\s{RRT}}
\DeclareMathOperator{\ads}{\s{ADS}}
\DeclareMathOperator{\scac}{\s{SCAC}}
\DeclareMathOperator{\rcolor}{\s{RCOLOR}}
\DeclareMathOperator{\emo}{\s{EM}}
\newtheoremstyle{custom}
  {10pt}
  {10pt}
  {\normalfont}
  {}
  {\bfseries}
  {}
  { }
  {}
\theoremstyle{custom}
\newtheorem{theorem}{Theorem}[section]
\newtheorem{lemma}[theorem]{Lemma}
\newtheorem{definition}[theorem]{Definition}
\newtheorem{example}[theorem]{Example}
\journal{Information Processing Letters}
\begin{document}

\begin{frontmatter}

\title{Ramsey-type graph coloring and diagonal non-computability}
\author{
  Ludovic Patey
}

\begin{abstract}
A function is diagonally non-computable (d.n.c.) if it diagonalizes against 
the universal partial computable function. 
D.n.c.\ functions play a central role in algorithmic randomness and reverse mathematics.
Flood and Towsner asked for which functions~$h$, the principle stating
the existence of an~$h$-bounded d.n.c.\ function ($\dnrs{h}$) implies the Ramsey-type König's lemma ($\rwkl$).
In this paper, we prove that for every computable order~$h$,
there exists an~$\omega$-model of~$\dnrs{h}$ which is not a not model of the Ramsey-type graph coloring principle
for two colors~($\rcolor_2$) and therefore not a model of~$\rwkl$.
The proof combines bushy tree forcing and a technique introduced by Lerman, Solomon and Towsner
to transform a computable non-reducibility into a separation over~$\omega$-models.
\end{abstract}

\begin{keyword}
Reverse mathematics \sep Forcing \sep Graph coloring \sep K\"onig's lemma


\end{keyword}

\end{frontmatter}

\section{Introduction}

In this paper, we deepen our understanding of the relation between
diagonal non-computability and consequences of the Ramsey-type weak König's lemma
by separating related principles within the framework of reverse mathematics.

Diagonal non-computability plays an important role in algorithmic randomness
in which is it known to computationally coincide with infinite subsets
of Martin-Löf randoms. In a reverse mathematical perspective, the corresponding
principle has been used as a discriminating statement for separating
theorems. Some principles in reverse mathematics -- $\wkl$~\cite{Jockusch1972Degrees}, 
$\rrt^2_2$~\cite{MillerPersonal}, $\rwwkl$~\cite{Bienvenu2014Ramsey}, can be characterized by d.n.c-like statements.
Therefore, diagonal non-computability can be considered as a unifying framework
for comparing existing principles.

The study of Ramsey-type versions of principles which are not consequences
of the Ramsey theorem has been initiated by Flood~\cite{Flood2012Reverse} with his 
Ramsey-type weak König's lemma ($\rwkl$). As pointed out by Flood \& Towsner~\cite{Flood2014Separating}, this principle aims to play
a central role in reverse mathematics, as the ``missing link'' explaining
the relation between Ramsey's theorem for pairs~($\rt^2_2$) 
and the weak König's lemma~($\wkl$). Indeed, previous proofs of~$\rt^2_2$ seemed
to require $\wkl$ even though Liu~\cite{Liu2012RT22} proved that~$\wkl$ is not a consequence of~$\rt^2_2$.
$\rwkl$ contains in fact the exact combinatorics needed in the proofs of~$\rt^2_2$
or even weaker statements like the Erd\H{o}s-Moser theorem~\cite{Flood2014Separating,Bienvenu2014Ramsey}.

\subsection{Diagonally non-computable functions}

A function~$f$ is \emph{diagonally non-computable} (d.n.c.) relative to~$X$ if for every index~$e$, $f(e) \neq \Phi^X_e(e)$,
where~$\Phi_e$ is an effective enumeration of all Turing machines.
D.n.c.\ functions have been extensively studied in literature~
\cite{Ambos-Spies2004Comparing,Dorais2014Comparing,Jockusch2013Diagonally,kumabe2009fixed}. The degrees of d.n.c. functions
coincide with the degrees of fixed-point free functions~\cite{Jockusch1989Recursively} 
and the degrees of infinite subsets of Martin-Löf random reals~
\cite{Greenberg2009Lowness,Kjos-Hanssen2009Infinite,Kuvcera1985Measure}.

\begin{definition}[Diagonal non-computability]
A function~$f$ is \emph{$h$-bounded} for some function~$h$ if $f(x) \leq h(x)$ for every input~$x$.
$\dnr$ is the statement ``For every definable set~$X$, there exists a function d.n.c.\ relative to~$X$''
and for every function~$h$, $\dnrs{h}$ is the statement ``For every definable set~$X$, there
exists an~$h$-bounded function d.n.c.\ relative to~$X$''.
\end{definition}

In particular, when $h$ is the function constantly equal to~$k$, we say that $f$ is $k$-bounded.
Friedberg~\cite{JockuschJr1989Degrees} proved that every $k$-bounded d.n.c.\ function computes a 2-bounded d.n.c. function.
Jockusch~\cite{JockuschJr1989Degrees}
proved that this reduction is not uniform, and Dorais \& Shafer~\cite{Dorais2014Comparing} constructed a non-standard structure
satisfying the statement $(\exists k)\dnrs{k}$ and which does not
contain any 2-bounded d.n.c.\ function.
The degrees of~2-bounded d.n.c.\ functions coincide with the degrees of completion
of Peano arithmetic~\cite{Jockusch1972Degrees}.

When considering orders $h$, that is, non-decreasing and unbounded functions, $h$-bounded d.n.c.\ functions are known to form a strict hierarchy within reverse mathematics~\cite{Ambos-Spies2004Comparing,Flood2014Separating}.
Recently, Bienvenu and the author~\cite{BienvenuEvery} proved that witnesses to the strictness of this hierarchy
can be constructed by probabilistic means. Jockusch~\cite{JockuschJr1989Degrees} showed the existence of computable orders~$h$
such that every Martin-Löf random computes an~$h$-bounded d.n.c.\ function.

\subsection{Ramsey-type principles}

Most of principles in reverse mathematics are of the form
$$
(\forall X)[\Phi(X) \imp (\exists Y)\Psi(X, Y)]
$$
where $\Phi$ and $\Psi$ are arithmetic formulas. A set $X$ such that $\Phi(X)$ holds is called an \emph{instance}
and a set~$Y$ such that $\Psi(X, Y)$ holds is a \emph{solution} to~$X$.
The Ramsey-type version of such a principle informally consists in asserting, for every instance~$X$,
the existence of infinitely many bits of information ``compatible'' with a solution to~$X$.

Among the principles in reverse mathematics, the weak König's lemma ($\wkl$) states,
for every infinite binary tree~$T$, the existence of an infinite path through~$T$. 
In this case, its Ramsey-type version -- i.e. the Ramsey-type König's lemma ($\rwkl$) -- states the existence
of a direction~$d$ -- left or right -- and an infinite set of depths~$H$, 
such that there are arbitrarily large nodes in~$T$ which go on direction~$d$ at each depth of~$H$.
We now give a precise definition of~$\rwkl$. We denote by~$\omega$ the set of positive integers
and by~$2^{<\omega}$ the set of finite binary strings.

\begin{definition}[Ramsey-type weak König's lemma]{\ }
A set $H \subseteq \omega$ is \emph{homogeneous} for a $\sigma \in 2^{<\omega}$
if $(\exists c < 2)(\forall i \in H)[i < |\sigma| \imp \sigma(i) = c]$.
A set~$H$ is \emph{homogeneous} for an infinite tree $T \subseteq 2^{<\omega}$ if the tree $\{\sigma \in T : H \mbox{ is homogeneous for } \sigma \}$ is infinite. $\rwkl$ is the statement ``Every infinite subtree of $2^{<\omega}$ has an infinite homogeneous set''.
\end{definition}

Flood introduced the principle in~\cite{Flood2012Reverse} under the name~$\rkl$, 
and proved that it is a strict consequence of the stable Ramsey's theorem for pairs ($\srt^2_2$)
and~$\wkl$ over~$\rca$. He also proved that~$\rca \vdash \rwkl \imp \dnr$ and asked whether the implication is strict.
Bienvenu \& al.\ \cite{Bienvenu2014Ramsey} studied extensively variants of~$\rwkl$ and constructed
an $\omega$-model of~$\dnr$ -- and even of the weak weak König's lemma ($\wwkl$) --  which is not a model of~$\rwkl$.
Flood \& Towsner~\cite{Flood2014Separating} reproved the existence of an~$\omega$-model of~$\dnr$ which is not a model of~$\rwkl$
using the techniques developped by Lerman~\& al.\ in~\cite{Lerman2013Separating}. They asked in particular for which functions~$h$
the principle~$\dnrs{h}$ implies~$\rwkl$. In this paper, we answer this question by proving that for every computable order~$h$,
$\dnrs{h}$ does not even imply a weaker statement over~$\omega$-models.
Note that this result is incomparable (neither stronger nor weaker) than the result of Bienvenu \& al.~\cite{Bienvenu2014Ramsey}
since there exists computable orders~$h$ such that the measure of oracles computing a $h$-bounded d.n.c.\ function
is null~\cite{MushfeqKhan2014Forcing}.

\subsection{Ramsey-type graph coloring}

A graph $G = (V,E)$ is \emph{$k$-colorable} if there is a function $f \colon V \imp k$ such that $(\forall x,y \in V)(\{x,y\} \in E \imp f(x) \neq f(y))$, and a graph is \emph{locally $k$-colorable} if every finite subgraph is $k$-colorable.
The statement that every locally $k$-colorable graph admits an infinite~$k$-coloration for some fixed~$k \geq 2$
has been proved to be equivalent to~$\wkl$ over~$\rca$ by Hirst in~\cite{Hirst1990Marriage}.
We focus on the Ramsey-type version of this principle.

\begin{definition}[Ramsey-type graph coloring]{\ }
Let $G = (V,E)$ be a graph.  A set $H \subseteq V$ is {\itshape $k$-homogeneous for $G$} if every finite $V_0 \subseteq V$ induces a subgraph that is $k$-colorable by a coloring that colors every vertex in $V_0 \cap H$ color~$0$.
$\rcolor_k$ is the statement  ``for every infinite, locally $k$-colorable graph $G = (V,E)$, there is an infinite $H \subseteq V$ that is $k$-homogeneous for $G$.''
\end{definition}

The principles~$\rcolor_k$ have been introduced by Bienvenu \& al. in~\cite{Bienvenu2014Ramsey}.
They proved that $\rwkl$ is equivalent to~$\rcolor_k$ for every~$k \geq 3$.
and constructed an~$\omega$-model of~$\wwkl$ (and in particular of~$\dnr$) which is not a model of~$\rcolor_2$.
It is currently unknown whether~$\rcolor_2$ is strictly weaker than~$\rwkl$ and in particular
whether or not it implies~$\dnr$ over~$\rca$. The principle $\rcolor_2$ seems easier to manipulate than~$\rwkl$,
and most proofs of the type ``$\Phi$ does not prove~$\rwkl$'' can be strengthened 
without any additional effort to ``$\Phi$ does not prove~$\rcolor_2$''.

\subsection{Notations}

\emph{String, sequence}.
Fix a function $h : \omega \to \omega$.
A \emph{string} (over $h$) is an ordered tuple of integers $a_0, \dots, a_{n-1}$
(such that $a_i < h(i)$ for every $i < n$). A \emph{sequence}  (over $h$)
is an infinite listing of integers $a_0, a_1, \dots$ (such that $a_i < h(i)$ for every $i \in \omega$).
We denote by $\preceq$ the prefix relation between two strings or between a string and a sequence.
For $s \in \omega$,
$h^s$ is the set of strings of length $s$ over~$h$,
$h^{<s}$ is the set of strings of length strictly smaller than~$s$ over~$h$,
$h^{<\omega}$ is the set of finite strings over~$h$
and $h^{\omega}$ is the set of sequences (i.e. infinite strings)
over~$h$. When $h$ is the constant function $k$, we write
$k^s$ (resp. $k^{<s}, \dots$) for $h^s$ (resp. $h^{<s}, \dots$).
Given a string $\sigma \in h^{<\omega}$, we denote by $|\sigma|$ its length.
Given two strings $\sigma, \tau \in h^{<\omega}$, $\sigma$ is a \emph{prefix}
of $\tau$ (written $\sigma \preceq \tau$ if there exists a string $\rho \in h^{<\omega}$
such that $\sigma \rho = \tau$.

\emph{Tree, path}.
A tree $T \subseteq k^{<\omega}$ is a set downward closed by prefix relation.
A \emph{binary} tree is a tree~$T \subseteq 2^{<\omega}$.
A set $P \subseteq \omega$ is a \emph{path} though~$T$ if for every $\sigma \prec P$,
$\sigma \in T$. A string $\sigma \in k^{<\omega}$ is a \emph{stem} of a tree $T$
if every $\tau \in T$ is comparable with~$\sigma$.
Given a tree $T$ and a string $\sigma \in T$,
we denote by $T^{[\sigma]}$ the subtree $\{\tau \in T : \tau \preceq \sigma \vee \tau \succeq \sigma\}$.

\emph{Turing functionals}.
We fix an effective enumeration~$\Phi_0, \Phi_1, \dots$ of all Turing functionals.
Given an oracle~$X$, we write $W^X_e = dom(\Phi_e^X)$, that is,
$W^X_e$ is the set of all inputs~$x$ such that $\Phi_e^X(x) \downarrow$.
We may apply a Turing functional over a string~$\sigma \in \omega^{<\omega}$.
In this case, by convention, $\Phi_e^\sigma(x) \downarrow \imp x < |\sigma|$.

An \emph{order}~$h$ is an unbounded, non-decreasing function over integers.
Given two sets~$A$ and~$B$, we write~$A \subseteq_{fin} B$ to mean that $A$ is a finite subset of~$B$.
We also denote by $A \times B$ the set of unordered pairs $\{a, b\}$ where $a \in A$ and $b \in B$.
Last, the notation~$A > x$ means that the set~$A$ is empty or its least element is greater than~$x$.

\section{Bushy tree forcing}\label{sect:bushy-tree}

The specificity of the combinatorics of diagonally non-computable functions~$f$ is 
that $f$ has to avoid \emph{at most one} bad value on each input~$e$.
Therefore, if we consider two functions~$f_0$ and $f_1$ having two different values
on input~$e$, at least one of them diagonalize against~$\Phi_e(e)$.
The exploitation of this observation leads to the notion of bushy tree forcing,
in which the construction of a d.n.c.\ function is done by simply applying cardinality arguments.
Bushy tree forcing was first developed in~\cite{Ambos-Spies2004Comparing}. Since then, it has been successfully applied
to various problematics, like proving the existence of a d.n.c.\ function of minimal degree~\cite{kumabe2009fixed}.
We state the core lemmas of bushy tree forcing without proving them. See the survey~\cite{MushfeqKhan2014Forcing}
for detailed proofs.

\begin{definition}[Bushy tree]
Fix an integer~$k$ and a string $\sigma \in \omega^{<\omega}$.
A tree $T$ is \emph{$k$-bushy above $\sigma$} if it has stem~$\sigma$
and whenever $\tau \succeq \sigma$ is not a leaf of~$T$,
it has at least~$k$ immediate children.
\end{definition}

\begin{definition}[Big set, small set]
Fix an integer~$k$ and some string $\sigma \in \omega^{<\omega}$.
A set $B \subseteq \omega^{<\omega}$ is \emph{$k$-big above $\sigma$} 
if there exists a finite tree $T$ $k$-bushy above $\sigma$ 
such that all leafs of~$T$ are in~$B$. 
If no such tree exists, $B$ is said to be \emph{$k$-small above~$\sigma$}.
\end{definition}

Those notions of~$k$-bigness and $k$-smallness can be justified as follows:
Suppose we have constructed an initial segment~$\sigma$ of a d.n.c.\ function~$f$.
We want to decide whether a $\Sigma^0_1$ property~$\varphi(f)$ will hold or not.
A property~$\varphi$ induces a set of strings
$B = \{ \tau \in \omega^{<\omega} : \varphi(\tau) \mbox{ holds}\}$.
Deciding~$\varphi(f)$ consists in either finding an extension~$\tau \succeq \sigma$
which is in~$B$ and such that~$\tau$ is again an initial segment of a d.n.c.\ function,
or ensuring that no extension of~$\sigma$ will meet~$B$.

Suppose that~$B$ is $2$-big above~$\sigma$. By definition, there exists
a finite tree~$T$ $2$-bushy above~$\sigma$ and whose leafs are all in~$B$.
In other words, for each leaf~$\tau$ in~$T$, $\varphi(\tau)$ holds.
By our previous observation, at each depth, there exists at most one bad value.
Therefore the tree~$T$ must have one leaf~$\tau$ which is an initial segment of a d.n.c.\ function.
By hypothesis, $\varphi(\tau)$ holds and $\sigma$ being the stem of~$T$, $\tau \succeq \sigma$.
We can then take this~$\tau$ as our desired extension and continue the construction.

Suppose now that~$B$ is $2$-small above~$\sigma$.
Informally, it means that~$\sigma$ does not have enough extensions satisfying~$\varphi$.
In this case, we want to ensure that no further extension of~$\sigma$
will meet~$B$ while continuing the construction. 
But, how can we ensure there will always be a d.n.c.\ extension avoiding~$B$
when choosing a leaf in~$T$ while considering other properties~$\varphi$ ?
It suffices to slightly modify our question for the next property~$\varphi_1$
and ask whether the corresponding set~$B_1$ is~$3$-big, and so on.

At this stage, it becomes clear that the good forcing notion is
a pair $(\sigma, B)$ where $\sigma \in \omega^{<\omega}$ and $B$
is a set which is $k$-small above~$\sigma$ for some~$k$.
The set $B$ is intuitively the ``bad'' set of extensions we want to avoid,
and therefore may only increase.
Another condition $(\tau, C)$ extends $(\sigma, B)$
if $\sigma \preceq \tau$ and $B \subseteq C$.

What about the constraint that~$\sigma$ must be an initial segment
of a d.n.c.\ function ?
Fix any set $X$, and consider the set $B^X_{DNC}$ of all strings which are not initial segments 
of any function d.n.c.\ relative to~$X$:
\[
B^X_{DNC} = \set{\sigma \in \omega^{<\omega} : 
  (\exists e < \card{\sigma}) \Phi^{X}_e(e) \downarrow = \sigma(e) }
\]
One can easily see that $B^X_{DNC}$ is $2$-small above the empty string.
Therefore, by starting the construction with the initial condition $(\epsilon, B^X_{DNC})$,
we ensure that the resulting function will be~d.n.c.\ relative to~$X$.
Notice that the choice of the set~$X$ has no impact on the construction as we only care about the bushiness
of the corresponding set~$B^X_{DNC}$.

The following three lemmas are at the core of every bushy tree argument.

\begin{lemma}[Concatenation]\label{lem:concatenation-prop}
Fix an integer~$k$.
Suppose that $A \subseteq \omega^{<\omega}$ is $k$-big above~$\sigma$.
If $A_\tau \subseteq \omega^{<\omega}$ is $k$-big above $\tau$
for every $\tau \in A$, then $\bigcup_{\tau \in A} A_\tau$ is $k$-big above~$\sigma$.
\end{lemma}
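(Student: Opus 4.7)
The plan is to prove this by explicit grafting: given the tree witnessing $k$-bigness of $A$ above $\sigma$, replace each leaf by the tree witnessing $k$-bigness of the corresponding $A_\tau$ above $\tau$, and check that the resulting object is still a finite $k$-bushy tree above $\sigma$ whose leaves all lie in $\bigcup_{\tau \in A} A_\tau$.

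More concretely, since $A$ is $k$-big above $\sigma$, unfold the definition to get a finite tree $T$ that is $k$-bushy above $\sigma$ with every leaf in $A$. For each leaf $\tau$ of $T$, the hypothesis gives a finite tree $T_\tau$ that is $k$-bushy above $\tau$ with every leaf of $T_\tau$ in $A_\tau$. I would then define
\[
T' \;=\; T \,\cup\, \bigcup_{\tau \text{ leaf of }T} T_\tau.
\]
Because the leaves of $T$ are pairwise incomparable and each $T_\tau$ consists only of strings extending $\tau$, the trees $T_\tau$ are pairwise disjoint above the leaves of $T$ and meet $T$ only at their stems $\tau$. Thus $T'$ is a finite tree with stem $\sigma$.

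The verification of $k$-bushiness of $T'$ is the only step requiring attention. Let $\rho \in T'$ be a non-leaf of $T'$. Either $\rho \in T$ is not a leaf of $T$, in which case its $\geq k$ immediate children in $T$ are also its immediate children in $T'$; or $\rho$ is a leaf of $T$ (so $\rho = \tau$ for some $\tau$) and then $\rho$ is by assumption not a leaf of $T_\tau$, hence has $\geq k$ immediate children in $T_\tau \subseteq T'$; or $\rho$ lies properly above some leaf $\tau$ of $T$ inside $T_\tau$, and since $\rho$ is not a leaf of $T'$ it is not a leaf of $T_\tau$ either, so it has $\geq k$ immediate children in $T_\tau \subseteq T'$.

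Finally, a leaf $\rho$ of $T'$ must be a leaf of some grafted $T_\tau$ (possibly $\rho = \tau$ itself if $T_\tau = \{\tau\}$, in which case $\tau \in A_\tau$), so $\rho \in A_\tau \subseteq \bigcup_{\tau' \in A} A_{\tau'}$. This shows $T'$ witnesses that $\bigcup_{\tau \in A} A_\tau$ is $k$-big above $\sigma$. The only real subtlety is bookkeeping around what counts as a leaf after grafting, which is handled by the case analysis above; the rest is just unwinding the definitions.
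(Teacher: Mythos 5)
The paper does not actually prove this lemma: it states the three core bushy-tree lemmas without proof and refers to the survey of Khan and Miller (\cite{MushfeqKhan2014Forcing}) for details. Your grafting argument is the standard proof and it is correct: take the witnessing tree $T$ for $A$, attach to each leaf $\tau$ the witnessing tree $T_\tau$ for $A_\tau$, and check bushiness and the leaf condition by cases on where a node sits. Two small bookkeeping remarks. First, each $T_\tau$, being downward closed with stem $\tau$, also contains all prefixes of $\tau$, so it does not literally consist only of strings extending $\tau$; this is harmless because those prefixes already lie in $T$. Second, the bushiness condition in the paper's definition only constrains nodes $\rho \succeq \sigma$, so your case analysis should formally restrict to such $\rho$; again harmless, since your three cases all have $\rho$ either in $T$ at or above a leaf of $T$ or in some $T_\tau$ at or above $\tau$, hence $\rho \succeq \sigma$ in the cases that matter. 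With those points noted, the proof goes through and matches what a reader would expect to find in the cited survey.
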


The concatenation property is very intuitive and is sufficient for
running a basic bushy tree argument.
Indeed, fix a condition~$(\sigma, B)$ where $B$ is $k$-bushy above~$\sigma$.
We need to decide whether a property~$\varphi$ 
with corresponding set~$C = \{\tau \in \omega^{<\omega} : \varphi(\tau) \mbox{ holds}\}$ can be forced.
The question becomes ``Is the set~$B \cup C$ $k$-big above~$\sigma$ ?''.
In the case of a positive answer, let~$T$ be a finite tree $k$-bushy above~$\sigma$
whose leaves are in~$B \cup C$. By the concatenation property, $B$
must be~$k$-small above one of the leaves~$\tau$ in~$T$, otherwise $B$ would be $k$-big above~$T$.
The condition~$(\tau, B)$ is the desired extension forcing~$\varphi(f)$ to hold.
In the case of a negative answer, $(\sigma, B \cup C)$ is an extension forcing~$\varphi(f)$ not to hold.

\begin{lemma}[Smallness additivity] \label{lem:smallness-add}
Suppose that $B_1, B_2, \ldots, B_n$ are subsets of $\omega^{<\omega}$, $k_1$, $k_2$, ..., $k_n$ are integers, and $\sigma \in \omega^{<\omega}$.
If $B_i$ is $k_i$-small above~$\sigma$ for all~$i$, then $\bigcup_i B_i$ is $(\sum_i k_i)$-small above $\sigma$. 
\end{lemma}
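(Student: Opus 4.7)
The plan is to reduce to the case $n=2$ by induction on $n$: once we know that the union of two sets $k$-small and $k'$-small above $\sigma$ is $(k+k')$-small above $\sigma$, grouping $B_1 \cup \cdots \cup B_{n-1}$ (which by the induction hypothesis is $(k_1+\cdots+k_{n-1})$-small) with $B_n$ gives the statement for $n$. So the whole content lies in the binary case, which I would attack via the contrapositive: if $B_1 \cup B_2$ is $(k_1+k_2)$-big above $\sigma$, then either $B_1$ is $k_1$-big above $\sigma$ or $B_2$ is $k_2$-big above $\sigma$.

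To establish the contrapositive, fix a finite tree $T$ witnessing $(k_1+k_2)$-bigness of $B_1 \cup B_2$ above $\sigma$, and proceed by induction on the height of $T$. In the base case $T = \{\sigma\}$, the stem $\sigma$ is itself a leaf and hence lies in $B_1 \cup B_2$; whichever of $B_1, B_2$ contains it is trivially $k_i$-big above $\sigma$, since the bushiness requirement is vacuous for a tree consisting of a single (leaf) node. For the inductive step, let $\tau_1, \ldots, \tau_m$ with $m \geq k_1 + k_2$ enumerate the immediate children of $\sigma$ in $T$. Each subtree $T^{[\tau_j]}$ is $(k_1+k_2)$-bushy above $\tau_j$ with leaves in $B_1 \cup B_2$, so the induction hypothesis applies to each of them.

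From that, partition $\{1,\ldots,m\}$ into $I_1 = \{j : B_1$ is $k_1$-big above $\tau_j\}$ and $I_2 = \{1,\ldots,m\} \setminus I_1$, where every $j \in I_2$ satisfies that $B_2$ is $k_2$-big above $\tau_j$. Since $|I_1| + |I_2| = m \geq k_1+k_2$, by pigeonhole either $|I_1| \geq k_1$ or $|I_2| \geq k_2$. In the first case, attach to $\sigma$ the children $\{\tau_j : j \in I_1\}$, and above each such $\tau_j$ graft a finite tree witnessing that $B_1$ is $k_1$-big above $\tau_j$; the resulting tree is $k_1$-bushy above $\sigma$ with all leaves in $B_1$. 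The second case is symmetric. The only real subtlety is the bookkeeping in the inductive step — in particular making sure the partition argument only needs the correct $k_i$ children at the top, and that the induction is run over a well-founded measure (tree height), since the condition "$T^{[\tau_j]}$ is $(k_1+k_2)$-bushy above $\tau_j$" with leaves in $B_1 \cup B_2$ must be verified; this is immediate because trimming $T$ below $\tau_j$ preserves bushiness and the leaf condition. I do not foresee a significant obstacle beyond this checking.
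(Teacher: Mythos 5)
The paper does not actually prove this lemma; it states it without proof and refers the reader to the Khan--Miller survey on bushy tree forcing for details. So there is no in-paper proof to compare against. Your argument is, however, the standard one found in that survey, and it is correct: reduce to $n=2$, pass to the contrapositive (if $B_1\cup B_2$ is $(k_1+k_2)$-big above $\sigma$ then $B_1$ is $k_1$-big or $B_2$ is $k_2$-big above $\sigma$), and run an induction on the height of the witnessing tree. The base case, where $\sigma$ is itself a leaf and hence lies in some $B_i$, is handled correctly since bushiness is vacuous for a tree whose stem is its only leaf. The inductive step is also sound: each subtree $T^{[\tau_j]}$ inherits $(k_1+k_2)$-bushiness and the leaf condition, the partition of the children of $\sigma$ into $I_1$ and $I_2$ is exhaustive by the induction hypothesis, and the counting $|I_1|+|I_2|=m\geq k_1+k_2$ forces $|I_1|\geq k_1$ or $|I_2|\geq k_2$ (otherwise $m\leq k_1+k_2-2$). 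Grafting the witness trees above the retained children then yields the required $k_i$-bushy tree with leaves in $B_i$. No gap.
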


Smallness additivity has a significant impact on the effectiveness of the forcing argument.
Note that we did not impose effectiveness constraints on the set~$B$ in the definition of a condition~$(\sigma, B)$,
and indeed, sets may have arbitrary complexity, like~$B^X_{DNC}$ which is strictly~$X$-c.e.
The question of the $k$-bigness of a set~$B$ is $B$-c.e.
However, the considered properties have often a c.e.\ corresponding set~$C$.
In this case, given a condition~$(\sigma, B)$, the question of~$k$-bigness of~$B \cup C$
is~$\Sigma^{0, B}_1$ whereas the $k$-bigness of~$C$ is~$\Sigma^0_1$.

Smallness additivity enables us to ask the question of~$k$-bigness of~$C$ independently of~$B$, and
combine the two sets after. Indeed, suppose that~$B$ is $k$-small above~$\sigma$,
and ask whether~$C$ is $k$-big above~$\sigma$.
If so, by the usual reasoning, find a leaf~$\tau$ in the bushy tree witnessing $k$-bigness of~$C$,
such that $B$ is still $k$-small above~$\tau$. In the other case,
$(\sigma, B \cup C)$ is a valid extension as $B \cup C$ is $2k$-small above~$\sigma$.

The effectiveness issue will be particularly important when defining the computable
graph in the ground construction, so that it satisfies specific properties
necessary to the iteration forcing.

\begin{lemma}[Small set closure]\label{lem:small-set-closure}
We say that $B  \subseteq \omega^{<\omega}$ is \emph{$k$-closed} if whenever $B$ is $k$-big above a string $\rho$ then $\rho \in B$. Accordingly, the \emph{$k$-closure} of any set~$B \subseteq \omega^{<\omega}$ is the set $C = \set{\tau \in \omega^{<\omega} : B \mbox{ is $k$-big above } \tau}$. If $B$ is $k$-small above a string~$\sigma$, then its closure is also $k$-small above $\sigma$.
\end{lemma}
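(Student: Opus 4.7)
The plan is to prove the contrapositive: assuming that the $k$-closure $C$ is $k$-big above $\sigma$, I will show that $B$ itself must be $k$-big above $\sigma$, contradicting the hypothesis. This is a direct application of the concatenation property (Lemma~\ref{lem:concatenation-prop}).

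First I would unfold the definitions. If $C$ is $k$-big above $\sigma$, then by definition there is a finite tree $T$ that is $k$-bushy above $\sigma$ and whose leaves are all elements of $C$. Let $L$ denote the set of leaves of $T$; the existence of $T$ shows that $L$ itself is $k$-big above $\sigma$. Moreover, for each $\tau \in L$, membership $\tau \in C$ means precisely that $B$ is $k$-big above $\tau$.

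Next I would apply the concatenation lemma with $A = L$ and $A_\tau = B$ for every $\tau \in L$. The hypothesis that $L$ is $k$-big above $\sigma$ and that each $A_\tau = B$ is $k$-big above the corresponding $\tau$ gives that $\bigcup_{\tau \in L} A_\tau = B$ is $k$-big above $\sigma$, contradicting the assumption that $B$ is $k$-small above $\sigma$. Equivalently, one could construct the witnessing tree explicitly: attach to each leaf $\tau$ of $T$ a finite tree $T_\tau$ that is $k$-bushy above $\tau$ with leaves in $B$ (which exists since $B$ is $k$-big above $\tau$), producing a single finite tree $T'$ which is $k$-bushy above $\sigma$ with all leaves in $B$.

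There is no real obstacle here; the statement is essentially an immediate corollary of concatenation, and the main thing to check is simply that one is allowed to take $A_\tau = B$ uniformly in $\tau$, which is fine since the concatenation lemma places no restriction on the relation between the sets $A_\tau$ for different $\tau$.
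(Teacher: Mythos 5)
Your proof is correct. The paper omits the proofs of the basic bushy-tree lemmas (deferring to the cited Khan--Miller survey), but your argument --- pass to the contrapositive and apply the concatenation property (Lemma~\ref{lem:concatenation-prop}) to the set of leaves of a tree witnessing $k$-bigness of the closure $C$, taking $A_\tau = B$ uniformly --- is exactly the standard one and checks out.
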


Considering the~$k$-closure~$C$ of the set~$B$ instead of the set~$B$ itself in the condition~$(\sigma, B)$
simplifies the reasoning. Indeed, taking an extension~$\tau$ of~$\sigma$ avoiding the set~$C$
is enough to deduce by~$k$-closure that $C$ is $k$-small above~$\tau$ and therefore that~$(\tau, C)$ is a valid condition.

\section{Separation over~$\omega$-models}\label{sect:separation-omega}

An \emph{$\omega$-structure} is a structure $\Mcal = (\omega, S, +, \cdot, <)$
where $\omega$ is the set of standard integers, $+$, $\cdot$ and $<$ 
are the standard operations over integers and $S$ is a set of reals
such that~$\Mcal$ satisfies the axioms of~$\rca$. Notice that $\Mcal$
is fully specified by its second-order part~$S$, and we may therefore amalgamate~$\Mcal$ with~$S$.
Friedman~\cite{Friedman1974Some} characterized the second-order parts~$S$ of~$\omega$-structures
as those forming a \emph{Turing ideal}, that is, a set of reals closed by Turing join
and downward-closed by Turing reduction.

Separation of a principle~$\Psf$ from a principle~$\Qsf$ over $\omega$-models differs 
from computable non-reducibility  in that solutions to~$\Psf$-instances
may enable to define other~$\Psf$-instances whose solutions compute solutions to~$\Qsf$-instances.
Therefore, we need to be careful about iterated applications of principles.
In their paper~\cite{Lerman2013Separating}, Lerman, Solomon \& Towsner
introduced a general technique for transforming a one-step separation  -- i.e. proof
of computable non-reducibility -- into a separation over~$\omega$-models, 
and used it for separating~$\emo$ from $\srt^2_2$ and $\ads$ from~$\scac$ over~$\omega$-models.
We briefly present their method.

Fix two principles~$\Psf$ and $\Qsf$, each of the form~$(\forall X)[\Phi(X) \imp (\exists Y)\Psi(X, Y)]$
where $\Phi$ and $\Psi$ are arithmetical formulas.
The construction of an~$\omega$-model of~$\Psf$ which is not a model of~$\Qsf$
consists in creating a Turing ideal~$\Ical$ together with a fixed $\Qsf$-instance $I_0 \in \Ical$,
such that every $\Psf$-instance $J \in \Ical$ has a solution in~$\Ical$,
whereas $I_0$ contains no solution in~$\Ical$. The techniques introduced by Lerman \& al.\ 
include

\begin{itemize}
	\item A \emph{ground construction}, whose goal is to define
	a fixed instance~$I_0$ of~$\Qsf$ with no computable solution, and ensuring
	some fairness property about the iteration forcing. Lerman \& al. used the forcing
 	framework for creating their instances although computable instances
	could be defined. The constructed instance is considered as the \emph{ground context}.

	\item An \emph{iteration forcing} which, given a context~$X$ sharing the same
	properties with the ground context -- i.e. $I_0$ has no $X$-computable solution
	and the iteration forcing relative to~$X$ satisfies some fairness properties --
	and an $X$-computable $\Psf$-instance $J$, constructs a solution $G$ to~$J$
	such that~$X \oplus G$ satisfies again the contextual properties.
\end{itemize}

Choosing carefully the enumeration $J_0, J_1, \dots$ of all~$\Psf$-instance functionals,
one can define an infinite sequence of contexts~$X_0 = I_0 \leq_T X_1 \leq_T X_2 \dots$
such that
\begin{itemize}
	\item $X_i$ does not compute a solution to~$I_0$
	\item $X_{i+1}$ computes a solution to~$J_i^{X_i}$
\end{itemize}
and such that the Turing ideal generated by the~$X_i$'s is
the second-order part of an~$\omega$-model of~$\Psf$.

The specificity of the techniques of Lerman \& al.\ is that instead
of creating a $\Qsf$-instance~$I_0$ which will try to satisfy the least
requirements necessary to the iteration forcing at each level,
$I_0$ considers only the ground context, and satisfies a much
more general notion of requirement. This notion has to be general
enough so that we can express the requirements at level~$i+1$
inside the requirements at level~$i$,
and therefore satisfy transparently requirements at all levels.

Given a~$\Qsf$-instance $I_0$, a context~$X$ and an iteration forcing notion~$\Pb^X$,
they define their general requirement~$\Kcal^{X, I_0}$ to be an
upward-closed set of forcing conditions~$c \in \Psf^X$ with a particular shape
$$
\Kcal^{X, I_0} = \{ c \in \Pb^X : (\exists F \subseteq_{fin} I_0)R^X_\Kcal(c, F)\}
$$
where $R^X_\Kcal$ is an $X$-computable predicate with some additional properties
specific to the forcing notion we consider. The set $F$ has to be understood as a finite sub-instance of~$I_0$. 
If $G(c)$ is the initial segment of the generic solution~$G$ we are constructing, an intended particular 
requirement is
$$
\Wcal_e^{X, I_0} = \{ c \in \Pb^X : (\exists F \subseteq_{fin} I_0) \Phi_e^{X \oplus G(c)} \mbox{ is not an $F$-solution} \}
$$
where ``$\Phi_e^{X \oplus G(c)}$ is not an $I_0$-solution'' has to be understood as
``$\Phi_e^{X \oplus G(c)}$ halts on an initial segment which cannot be extended into an~$F$-solution.''
Therefore, if some condition~$c \in \Wcal_e$, then it cannot be extended into an infinite set~$G$
such that~$\Phi^{X \oplus G}_e$ is a solution to~$I_0$.

Of course, we cannot expect all our requirements~$\Wcal_e^{X, I_0}$ to be~$\Pb^X$-dense,
as there are Turing indices~$e$ such that $\Phi_e^{X \oplus G}$ is nowhere defined, whatever the oracle~$G$.
In those cases, one may want to force~$\Phi_e^{X \oplus G}$ not to be total.
In Lerman \& al.\ framework, this is done by ensuring that further extensions avoid the following set for some~$x \in \omega$:
$$
\Kcal^{X, [x, +\infty)} = \{ c \in \Pb^X : (\exists F \mbox{ finite $\Qsf$-instance over } [x, +\infty)) R^X_\Kcal(c, F)\}
$$
In the case of~$\Wcal_e^{X, [x, +\infty)}$, it corresponds to forcing $\Phi_e^{X \oplus G}$ not to halt on values greater than~$x$.
The fairness condition that the $\Qsf$-instance~$I_0$ has to satisfy states that if there are valid extensions
in $\Kcal^{X, F}$, for finite $\Qsf$-instance~$F$ over an arbitrarily far domain, then $I_0$ will
take one of those~$F$ as a part of its own instance, and therefore there will exist a valid extension
in~$\Kcal^{X, I_0}$. The combinatorics of~$\Psf$ have to be good enough to ensure that,
in a context~$X$ where the fairness condition holds, either we will find an extension
in~$\Kcal^{X, I_0}$, or we will avoid the requirement~$\Kcal^{X, [x, +\infty)}$ for some~$x \in \omega$.

\section{Main result}

Flood and Towsner~\cite{Flood2014Separating} asked for which functions~$h$ the statement $\dnrs{h}$ implies~$\rwkl$ over~$\rca$.
In the case of functions whose range is bounded by some constant~$k$, $\dnrs{h}$ implies
$\dnrs{k}$ and therefore is equivalent to~$\wkl$. We prove that in the case of computable orders~$h$,
$\dnrs{h}$ does not even imply the Ramsey-type graph coloring principle for pairs over~$\omega$-models.

\begin{theorem}\label{thm:computable-h-rcolor2}
For every computable order~$h$, there exists an~$\omega$-model of~$\dnrs{h}$ which is not a model of~$\rcolor_2$.
\end{theorem}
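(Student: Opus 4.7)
The plan is to apply the Lerman--Solomon--Towsner methodology described in Section~\ref{sect:separation-omega}, using bushy tree forcing (Section~\ref{sect:bushy-tree}) as the iteration forcing notion. I would fix a computable locally $2$-colorable graph $G_0$ to play the role of the witness instance $I_0$, and construct the desired $\omega$-model as the Turing ideal generated by a sequence $G_0 = X_0 \leq_T X_1 \leq_T \cdots$: at each step, given a context $X_i$ maintaining the ground invariants (no $X_i$-computable $2$-homogeneous set for $G_0$, together with a fairness condition), I would apply the iteration forcing relative to $X_i$ and the next enumerated $\dnrs{h}$-instance $Y_i \leq_T X_i$ to obtain an $h$-bounded d.n.c.\ function $f_i$ relative to $Y_i$, and set $X_{i+1} = X_i \oplus f_i$.

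The iteration forcing conditions would be pairs $(\sigma, B)$ with $\sigma \in h^{<\omega}$, $B \supseteq B^X_{DNC}$, and $B$ being $k$-small above $\sigma$ for some $k$. For each functional $\Phi_e$, the main task is to extend $(\sigma, B)$ so as to force $\Phi_e^{X \oplus f}$ not to be $2$-homogeneous for $G_0$. Letting $C_x = \{\tau \succeq \sigma : \Phi_e^{X \oplus \tau}$ enumerates some $v > x\}$, I would split into two cases. If $C_x$ is $k$-small above $\sigma$ for some $x$, then by smallness additivity (Lemma~\ref{lem:smallness-add}) the pair $(\sigma, B \cup C_x)$ is valid and forces $\Phi_e^{X \oplus f}$ to be finite. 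Otherwise $C_x$ is $k$-big above $\sigma$ for every $x$, producing $k$-bushy trees whose leaves enumerate arbitrarily large vertices; the fairness condition on $G_0$ relative to $X$ would then furnish a finite subgraph $F \subseteq G_0$ and a bushy subtree whose leaves witness, via iterated extensions, that $\Phi_e^{X \oplus f}$ must enumerate the two endpoints of some edge of $F$, violating independence and hence $2$-homogeneity.

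The ground construction would build $G_0$ as a computable bipartite graph with bipartition $\omega = V_0 \sqcup V_1$ and edges restricted to $V_0 \times V_1$, ensuring local $2$-colorability; in such a graph the $2$-homogeneous sets coincide with the infinite independent sets. The construction must simultaneously satisfy two families of requirements: (i) diagonalize against every computable set by adding an edge inside it, so that no infinite computable set is independent in $G_0$; and (ii) the fairness condition: for every Turing index $e$, every context $X$, and every condition $c$, if $c$ can be extended into the requirement $\Kcal_e^{X, F}$ for finite bipartite graphs $F$ on arbitrarily late vertices, then $G_0$ itself already contains such an $F$ as a subgraph. I would carry this out by a meta-level bushy tree forcing that schedules these requirements, adding edges to form fair witnesses while preserving the diagonalization.

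The hardest part will be the ground construction. The fairness requirement must be formulated in terms of an abstract context-free forcing predicate $R(c, F)$ so that the single computable graph $G_0$ simultaneously witnesses fairness for all iteration contexts $X$ built later; this mirrors what Lerman, Solomon, and Towsner do in their framework. I also expect a delicate interaction between the bushiness parameter $k$ growing along the iteration and the finite bipartite subgraphs to be embedded in $G_0$: the ground construction must anticipate that for every $k$ and every sufficiently late $x$, a suitable $k$-bushy witness of an edge on $[x, +\infty)$ must already appear inside $G_0$, all while continuing to destroy each new computable candidate for an infinite independent set. Coordinating these two families of requirements, under the constraints imposed by bipartiteness, is the technical heart of the argument.
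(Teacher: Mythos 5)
Your high-level plan matches the paper: Lerman--Solomon--Towsner-style propagation of a fairness invariant through a bushy-tree iteration forcing, with a fixed computable graph as the $\rcolor_2$-instance to be defeated. However, there is a central combinatorial error that would sink the argument as written. You identify $2$-homogeneous sets with independent sets, but the definition of $2$-homogeneity is stronger: a set $H$ fails to be $2$-homogeneous for $\Gcal$ precisely when it contains two vertices joined by a path of \emph{odd length} in $\Gcal$, not merely an edge. (In the path $a\,\text{--}\,b\,\text{--}\,c\,\text{--}\,d$, $\{a,d\}$ is independent but not $2$-homogeneous.) This is exactly why the paper introduces $\Odd(\Gcal)$ and parametrizes requirements by finite subsets of $\Odd(\Gcal)$ rather than by finite subgraphs of $\Gcal$. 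Your proposed diagonalization ``add an edge inside the computable set'' fails in two ways: if the two target vertices lie at even distance in the same component, adding an edge destroys local $2$-colorability; and worse, with your fixed global bipartition $\omega = V_0 \sqcup V_1$, two target vertices on the same side can never be joined by an edge at all, yet they must still be defeated. The paper's gadget (a path of length $3$ through two fresh vertices, applied only when the targets lie in distinct components) is precisely what circumvents this, and it only makes sense once the requirements are phrased in terms of odd-distance pairs rather than edges. The same issue infects your fairness condition: speaking of ``finite bipartite graphs $F$'' that $G_0$ must contain as subgraphs is not quite what is needed; the paper's condition is phrased over pairs $A_0 \times A_1$ of vertex-sets and promises that selected pairs become odd-distance pairs in $\Gcal$, which the ground construction realizes via the two-completion gadget around a fresh edge $\{a,b\}$.

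A secondary deviation: you propose to run the ground construction as ``a meta-level bushy tree forcing.'' The paper instead uses a finite-injury priority argument, restraining finitely many finite connected components per strategy and deciding all edges among $\{0,\dots,s\}$ before stage $s$, which is what guarantees $\Gcal$ is computable. Bushy tree forcing is tailored to building functions, not graphs, and it is not clear how your meta-forcing would keep the output computable while coordinating the two families of requirements and the growing bushiness parameter; the priority framework handles this cleanly because restrained components are finite and a satisfied strategy is never injured. You would also need to justify that $B \cup C_x$ remains a valid condition in your case split, which requires a roominess-style slack ($h(|\sigma|) \geq 4k$ in the paper) that your proposal does not mention.
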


The proof of Theorem~\ref{thm:computable-h-rcolor2} follows the pattern presented in section~\ref{sect:separation-omega},
in which the iteration forcing is a variant of the bushy tree forcing presented in section~\ref{sect:bushy-tree}.
Fix a computable order~$h$. The proof is divided into two main parts:
\begin{itemize}
	\item The first part describes the iteration forcing in a context~$X$, assuming
	that we have already constructed a computable, locally 2-colorable graph~$\Gcal$
	ensuring that the context~$X$ satisfies some properties.

	\item The second part is the actual finite-injury priority construction of the graph~$\Gcal$
	so that it satisfies the required properties over the ground context
	(the latter being the empty context as~$\Gcal$ is a computable graph).
\end{itemize}

Finally, we describe the construction of the~$\omega$-model
of~$\dnrs{h}$ which is not a model of~$\rcolor_2$.

\subsection{Iteration forcing}

The general context of the iteration forcing is a fixed set $X$ such that
\begin{itemize}
	\item $X$ does not compute a solution to~$\Gcal$
	\item each requirement $\Kcal^{X, \Gcal}$ is uniformly dense (in a sense defined below)
\end{itemize}

Our goal is to define an $h$-bounded d.n.c.\ function $f$ such that $X \oplus f$ satisfies the same
context properties, that is~$X \oplus f$ does not compute a solution
to~$\Gcal$ and the requirements $\Kcal^{X \oplus f, \Gcal}$ are uniformly dense.

\begin{definition}
Our set of forcing conditions~$\Qb^X$ is the set of pairs $(\sigma, B)$ such that
$\sigma \in h^{<\omega}$ and $B \subseteq h^{<\omega}$ 
is a $h(|\sigma|)$-closed set of strings $h(|\sigma|)$-small above~$\sigma$.
We say that $(\tau, C) \leq (\sigma, B)$ if $\sigma \preceq \tau$ and $B \subseteq C$.
\end{definition}

Our initial condition will be $(\epsilon, B^X_{DNC})$. Therefore, every infinite
descending sequence of conditions will produce an $h$-bounded function d.n.c.\ relative to~$X$.

We say that a condition~$(\sigma, B)$ is \emph{$k$-roomy} if~$B$ is $k$-small above~$\sigma$
and $h(|\sigma|) \geq 4k$. The intuition of~$k$-roominess is that there is room for increasing
the size of the bad set~$B$. The choice of the coefficient is for the purpose of our combinatorics.
Notice that, thanks to the fact that $h$ is an order, every condition can be extended to a $k$-roomy
condition for some~$k \in \omega$.

\subsubsection{Requirements and uniform density}

As presented in section~\ref{sect:separation-omega},
a requirement is a set of conditions of the iteration forcing~$\Pb^X$
which enable to diagonalize positively against the instance~$\Gcal$ of~$\rcolor_2$.
In our case, a finite set of vertices~$F$ is extensible to an infinite solution
to~$\Gcal$ if and only if it does not contain a pair of vertices~$\{x, y\}$
which are the endpoints of a path of odd length in~$\Gcal$.
Based on this observation, we define our set of diagonalizing pairs as follows.

\begin{definition}
Given a locally 2-colorable graph~$\Gcal$, we denote by~$\Odd(\Gcal)$ the $\Gcal$-c.e.\
set of pairs~$\{x, y\}$ such that there exists a path of odd length 
whose endpoints are~$x$ and~$y$.
\end{definition}

We extend the notation~$\Odd(\Gcal)$ to sets of the form~$A_0 \times A_1$
where $A_0$ and $A_1$ are arbitrary sets of integers.
In this case, we can see~$A_0 \times A_1$ as the set of edges of a graph
(which is locally 2-colorable if and only if~$A_0 \cap A_1 = \emptyset$).
In particular, $\{x\} \in \Odd(A_0 \times A_1)$ for each~$x \in A_0 \cap A_1$.

A requirement is usually a set of conditions.
In our case, thanks to the smallness additivity property of the bushy tree
forcing, a requirement can be defined only in terms of the stem
part of the condition.

\begin{definition}\label{def:requirement}
A  \emph{requirement} is a set $\Kcal^{X, \Gcal}$ of finite strings over~$h$
which is closed under extensions and is defined by
$$
\Kcal^{X, \Gcal} = \{\tau \in h^{<\omega} : (\exists F \subseteq_{fin} \Odd(\Gcal)) R^X_\Kcal(\tau, F) \}
$$
for an $X$-computable relation $R^X_{\Kcal}$ such that if $R^X_\Kcal(\tau, F)$ holds for a finite set of pairs~$F$,
and $F_1$ is a finite set of singletons extensibles to pairs in~$F$ -- that is 
$(\forall \{z\} \in F_1)(\exists \{x,y\} \in F)[z \in \{x, y\}]$ -- then $R^X_\Kcal(\tau, F_1)$ also holds.
This last property will be used in the proof of Theorem~\ref{thm:uniform-to-settling}.
\end{definition}

We illustrate our general notion of requirement by describing
a particular scheme of requirements which will ensure that~$X \oplus f$
does not compute a solution to~$\Gcal$.
We shall see later another scheme of requirements which will
``propagate'' uniform density to the context~$X \oplus f$.

\begin{example}\label{ex:if-meets-wn-then-diag}
For each integer~$m$, define the requirement
$$
\Wcal_m^{X, \Gcal} = \{ \tau \in h^{<\omega} : (\exists \{a,b\} \in \Odd(\Gcal))
	\Phi_m^{X \oplus \tau}(a) = \Phi_m^{X \oplus \tau}(b) = 1 \}
$$

In this case, the~$X$-computable predicate~$R_{\Wcal_m}^X(\tau, F)$ is defined by
$$
F \neq \emptyset \wedge (\forall x \in \bigcup F)[\Phi_m^{X \oplus \tau}(x) = 1]
$$
Note that it satisfies the property of Definition~\ref{def:requirement}.
Suppose a condition $(\sigma, B)$ used to construct our generic function~$f$ satisfies~$\sigma \in \Wcal_m^{X, \Gcal}$.
Because~$\sigma$ is an initial segment of~$f$, we have successfully diagonalized 
against $\Phi_m^{X \oplus f}$ computing a solution to~$\Gcal$.
\end{example}

We can replace the graph~$\Gcal$ in a requirement~$\Kcal^{X, \Gcal}$
by a set of pairs~$A_0 \times A_1$, which can be thought of as the set of edges
of a finite or infinite graph, and consider the set
$$
\Kcal^{X, A_0 \times A_1} = \{ \tau \in h^{<\omega} : (\exists F \subseteq_{fin} \Odd(A_0 \times A_1)) R^X_\Kcal(\tau, F) \}
$$

\begin{definition}
We say $\Kcal^X$ is \emph{essential} below some $k$-roomy condition $(\sigma, B)$ if for every~$x$,
there exists a finite set~$A_0 > x$ such that for every~$y$, there exists a finite set $A_1 > y$
such that $\Kcal^{X, A_0 \times A_1}$ is $2k$-big above~$\sigma$. 
\end{definition}

The following notion of uniform density is a fairness property
of the context~$X$. It states that if the requirement $\Kcal^{X, \Gcal}$
provides infinitely many fresh values on which the graph $\Gcal$
could create a diagonalization, then at some point, $\Gcal$ will actually
produce one.

\begin{definition}
We say $\Kcal^{X, \Gcal}$ is \emph{uniformly dense} if whenever~$\Kcal^X$
is essential below some $k$-roomy condition~$(\sigma, B)$,
$\Kcal^{X, \Gcal}$ is $k$-big above~$\sigma$.
\end{definition}

\begin{definition}
We say $(\sigma, B)$ \emph{settles} $\Kcal^{X, \Gcal}$ if either
$\sigma \in \Kcal^{X, \Gcal}$ or
there exists an~$x \in \omega$ such that $\Kcal^{X, (x, +\infty)^2} \subseteq B$.
\end{definition}

Intuitively, a condition~$(\sigma, B)$ settles a requirement~$\Kcal^{X, \Gcal}$
if it forces a finite bad outcome $\sigma \in \Kcal^{X, \Gcal}$
(usually a diagonalization)
or forces an infinite bad outcome $\Kcal^{X, (x, +\infty)^2)} \subseteq B$ 
(basically forcing the partiality of a functional). This intuition
is best illustrated by the actual requirement scheme we care about.

\begin{example}\label{ex:settles-solution}
Suppose $(\sigma, B)$ settles $\Wcal_m^{X, \Gcal}$. We claim that
if $(\sigma, B)$ appears in a sequence defining a generic function~$f$,
then $\Phi_m^{X \oplus f}$ is not a solution to~$\Gcal$.
If $\sigma \in \Wcal_m^{X, \Gcal}$, then this claim was verified by Example~\ref{ex:if-meets-wn-then-diag}.
So assume that $(\sigma, B)$ settles $\Kcal^{X, \Gcal}$ via the second clause
and fix the witness~$x$. We claim that for all $(\tau, C) \leq (\sigma, B)$
and all $y > x$, $\Phi_m^{X \oplus \tau}(y)\uparrow$ or $\Phi_m^{X \oplus \tau}(y) \downarrow \neq 1$.
It follows immediately from this claim that~$\Phi_m^{X \oplus f}$ is partial or defines a finite set
and hence is not a solution to~$\Gcal$.
To prove this claim, fix $(\tau, C) \leq (\sigma, B)$.
Suppose for a contradiction that there is a $y > x$ such that $\Phi_m^{X \oplus \tau}(y) = 1$.
Then
$$
(\exists \{y, y\} \in (x, +\infty)^2) \Phi_m^{X \oplus \tau}(y) = \Phi_m^{X \oplus \tau}(y) = 1
$$
and hence $\tau \in \Kcal^{X, (x, +\infty)^2} \subseteq B \subseteq C$, contradicting
the fact that $C$ is $k$-small above~$\tau$ for some~$k$.
\end{example}

The following lemma conforts the intuition that the settling relation
is a forcing notion, and in particular is stable by condition extension.

\begin{lemma}
If $(\sigma, B)$ settles $\Kcal^{X, \Gcal}$ and $(\tau, C) \leq (\sigma, B)$, then $(\tau, C)$
settles $\Kcal^{X, \Gcal}$.
\end{lemma}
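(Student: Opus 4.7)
The plan is to unpack the two clauses of the settling definition and check each one survives extension. Recall that $(\tau,C)\leq(\sigma,B)$ means precisely $\sigma\preceq\tau$ and $B\subseteq C$, so both ingredients interact well with the two possible witnesses that $(\sigma,B)$ settles $\Kcal^{X,\Gcal}$.

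First I would split on which clause witnesses that $(\sigma,B)$ settles $\Kcal^{X,\Gcal}$. In the first case $\sigma\in\Kcal^{X,\Gcal}$; here I appeal to the fact, stated explicitly in Definition~\ref{def:requirement}, that every requirement $\Kcal^{X,\Gcal}$ is closed under extensions of its stems. Since $\sigma\preceq\tau$, this immediately gives $\tau\in\Kcal^{X,\Gcal}$, so $(\tau,C)$ settles via the first clause.

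In the second case there is an $x\in\omega$ with $\Kcal^{X,(x,+\infty)^2}\subseteq B$. The witness $x$ depends only on the ``bad set'' component, not on the stem, so I can simply reuse the same $x$: from $B\subseteq C$ I obtain $\Kcal^{X,(x,+\infty)^2}\subseteq C$, which is exactly the second clause for $(\tau,C)$.

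There is no genuine obstacle here; the lemma is a sanity check ensuring that ``settling'' behaves like a forcing predicate, and its proof is a direct unwinding of the definitions using (i) the extension-closure of $\Kcal^{X,\Gcal}$ built into Definition~\ref{def:requirement}, and (ii) the fact that the set $\Kcal^{X,(x,+\infty)^2}$ is defined in terms of the context $X$ and a domain of fresh vertices, so it does not change when we pass from $(\sigma,B)$ to $(\tau,C)$. The only thing worth flagging, rather than proving, is that this is why Definition~\ref{def:requirement} bothers to insist that requirements be closed under stem-extensions: without that, Case 1 would fail and the settling relation would not be stable under extension, which in turn would break the later use of settled conditions along a generic filter in Example~\ref{ex:settles-solution}.
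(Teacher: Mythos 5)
Your proof is correct and follows essentially the same two-case unwinding of the definition that the paper uses: closure of $\Kcal^{X,\Gcal}$ under stem-extensions for the first clause, and $B\subseteq C$ for the second. The extra commentary on why Definition~\ref{def:requirement} imposes extension-closure is accurate but not needed for the argument.
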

\begin{proof}
If $\sigma \in \Kcal^{X, \Gcal}$, then by closure of~$\Kcal^{X, \Gcal}$ under suffixes,
$\tau \in \Kcal^{X, \Gcal}$ and therefore $(\tau, C)$ settles $\Kcal^{X, \Gcal}$.
Suppose now that $(\sigma, B)$ settles $\Kcal^{X, \Gcal}$ via the second clause, with some witness~$x$.
$\Kcal^{X, (x, +\infty)^2} \subseteq B \subseteq C$, therefore $(\tau, C)$
settles $\Kcal^{X, \Gcal}$ with the same witness.
\end{proof}

The following theorem is the heart of the iteration forcing.
It states that, under the assumption of uniform density,
the set of conditions settling a requirement is dense.

\begin{theorem}\label{thm:uniform-to-settling}
Let $\Kcal^{X, \Gcal}$ be a uniformly dense requirement and let $(\sigma, B)$
be a condition. There is an extension $(\tau, C) \leq (\sigma, B)$ settling $\Kcal^{X, \Gcal}$.
\end{theorem}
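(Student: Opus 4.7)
The plan is to proceed by a dichotomy on whether $\Kcal^X$ is essential below $(\sigma, B)$. First, since $h$ is an order, iterated application of the concatenation property (Lemma~\ref{lem:concatenation-prop}) lets us extend $\sigma$ to any desired length while keeping $B$ still $h(|\sigma|)$-small above the extended stem. Choosing the length large enough that $h(|\sigma'|) \geq 4k$ for some convenient $k$ ensures $(\sigma', B)$ is $k$-roomy, so without loss of generality I assume $(\sigma, B)$ is itself $k$-roomy.

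In the essential case, uniform density directly gives that $\Kcal^{X, \Gcal}$ is $k$-big above $\sigma$. Let $T$ be a finite $k$-bushy tree above $\sigma$ witnessing this, with all leaves in $\Kcal^{X, \Gcal}$. Since $B$ is $k$-small above $\sigma$, the contrapositive of concatenation yields a leaf $\tau$ of $T$ above which $B$ is still $k$-small: otherwise, instantiating concatenation with $A$ the set of leaves of $T$ and $A_\tau = B$ would force $B$ to be $k$-big above $\sigma$. The monotonicity of both $k$-closure and $k$-smallness in the parameter $k$ (since $h(|\tau|) \geq h(|\sigma|) \geq 4k \geq k$) guarantees that $(\tau, B)$ is a valid condition extending $(\sigma, B)$; as $\tau \in \Kcal^{X, \Gcal}$, it settles the requirement through the first clause of the definition.

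In the non-essential case, fix a witness $x_0$. The target extension is $(\sigma, C)$, where $C$ is the $h(|\sigma|)$-closure of $B \cup \Kcal^{X, (x_0, +\infty)^2}$; by design, this settles $\Kcal^{X, \Gcal}$ via the second clause with $x_0$ as the witness. The whole burden of the case reduces to a single technical claim: $\Kcal^{X, (x_0, +\infty)^2}$ is $2k$-small above $\sigma$. Granting this, smallness additivity (Lemma~\ref{lem:smallness-add}) makes $B \cup \Kcal^{X, (x_0, +\infty)^2}$ $3k$-small above $\sigma$; Lemma~\ref{lem:small-set-closure} transfers this to the closure $C$; and $3k \leq h(|\sigma|)$ by $k$-roominess yields that $C$ is both $h(|\sigma|)$-small and $h(|\sigma|)$-closed above $\sigma$, as required for a valid condition.

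The main obstacle is the technical claim itself. The plan is a proof by contradiction: suppose a finite $2k$-bushy tree $T$ above $\sigma$ witnesses $2k$-bigness of $\Kcal^{X, (x_0, +\infty)^2}$, extract the finite set $V > x_0$ of all vertices appearing in the leaf witnesses $F_\tau \subseteq_{fin} \Odd((x_0, +\infty)^2)$, and apply non-essentialness to $A_0 = V$ to obtain a threshold $y(V)$. The delicate step is to use the singleton-closure property of $R^X_\Kcal$ stipulated in Definition~\ref{def:requirement} to re-witness each leaf $\tau \in \Kcal^{X, V \times V}$ by a single singleton $\{\{z\}\}$ with $z \in V_\tau$, thereby gaining flexibility in choosing $A_0, A_1$ subject only to the constraint $A_0 \cap A_1 \ni z$. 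Driving this flexibility against $y(V)$ to force some $\Kcal^{X, A_0 \times A_1}$ to be simultaneously $2k$-big (from $T$) and $2k$-small (from non-essentialness) should produce the contradiction; executing this balancing act — in particular handling the tension between needing $A_1 > y(V)$ and the common-vertex constraint forcing a vertex of $V$ to sit in $A_1$ — is the step I expect to require the most care.
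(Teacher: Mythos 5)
The structure of your proof matches the paper's up to the non-essential case, and your handling of the essential case (including the validity check on the extension $(\tau,B)$ via the monotonicity of bushiness in $k$) is sound. The gap is in the non-essential case: the claim that ``the whole burden reduces to'' showing $\Kcal^{X,(x_0,+\infty)^2}$ is $2k$-small above $\sigma$ is false, and the tension you flag at the end is not a technicality to be finessed but a genuine obstruction. Non-essentialness gives smallness of $\Kcal^{X,A_0\times A_1}$ only for \emph{disjoint} finite $A_0>x_0$, $A_1>y(A_0)$ with $y(A_0)>\max A_0$; it says nothing about pairs drawn from overlapping or identical parts, and in particular says nothing about singletons $\{z\}\in\Odd((x_0,+\infty)^2)$. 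A requirement whose witnesses are all singletons can make $\Kcal^{X,(x_0,+\infty)^2}$ big above $\sigma$ while every $\Kcal^{X,A_0\times A_1}$ (disjoint parts, $A_1$ far out) stays small, so the smallness you need simply need not hold.

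The paper resolves this by a further dichotomy on whether $\Kcal^{X,(x,+\infty)^2}$ is $3k$-small or $3k$-big above $\sigma$. In the small sub-case your plan goes through essentially as written (with $3k$ in place of $2k$). In the big sub-case — the one your reduction ignores — the key move is that the witness $x$ and the stem $\sigma$ both change: extract a finite $A_0$ from the witnessing tree so that $\Kcal^{X,A_0^2}$ is $3k$-big, use non-essentialness at $A_0$ to get a threshold $y>A_0$ with $\Kcal^{X,A_0\times(y,+\infty)}$ $2k$-small, extend the stem (by concatenation) to a leaf $\tau\in\Kcal^{X,A_0^2}$, and then use the singleton-closure property of $R^X_\Kcal$ to show that above $\tau$, membership in $\Kcal^{X,(y,+\infty)^2}$ already implies membership in $\Kcal^{X,A_0\times(y,+\infty)}$, which has been added to the bad set. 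The condition $(\tau,B_2)$ then settles the requirement via the second clause with witness $y$, not $x_0$. Your proposal, by insisting on $(\sigma,C)$ with witness $x_0$, cannot reach this resolution: you need to let both the stem and the threshold move.
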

\begin{proof}
We can suppose w.l.o.g. that $(\sigma, B)$ is $k$-roomy for some~$k \in \omega$.

First suppose that $\Kcal^{X, \Gcal}$ is essential below $(\sigma, B)$.
By definition of uniformly dense, $\Kcal^{X, \Gcal}$
is $k$-big above~$\sigma$. By the contatenation property,
there exists an extension $\tau \succeq \sigma$
which is in~$\Kcal^{X, \Gcal} \setminus B$ and such that~$B$ is $k$-small above~$\tau$.
The condition~$(\tau, B)$ is a valid extension
settling $\Kcal^{X, \Gcal}$ via the first clause.

Suppose now that $\Kcal^{X, \Gcal}$ is not essential below $(\sigma, B)$.
By definition, there exists an~$x \in \omega$ such that for every finite set $A_0 > x$,
there exists a~$y \in \omega$ such that for every finite set $A_1 > y$,
$\Kcal^{X, A_0 \times A_1}$ is $2k$-small above~$\sigma$.
Fix such~$x$. We have two cases:
\begin{itemize}
	\item Case 1: $\Kcal^{X, (x, +\infty)^2}$ is $3k$-small above~$\sigma$.
	By the smallness additivity property, $B_1 = B \cup \Kcal^{X, (x, +\infty)^2}$ is $4k$-small above~$\sigma$.
	Therefore $(\sigma, B_1)$ is an extension of $(\sigma, B)$ settling $\Kcal^{X, \Gcal}$
	via the second clause with witness~$x$.
	
	\item Case 2: $\Kcal^{X, (x, +\infty)^2}$ is $3k$-big above~$\sigma$.
	Let $T \subseteq h^{<\omega}$ a the finite tree $3k$-bushy above~$\sigma$ 
	whose leafs are in~$\Kcal^{X, (x, +\infty)^2}$.
	There are finitely many leafs in~$T$, so there exists a finite set~$A_0$
	such that $\Kcal^{X, A_0^2}$ is $3k$-big above~$\sigma$.
	By assumption, there exists some $y > A_0$ be such that for every finite set $A_1 > y$,
	$\Kcal^{X, A_0 \times A_1}$ is $2k$-small above~$\sigma$. Fix such~$y$.
	The set $\Kcal^{X, A_0 \times (y, +\infty)}$ is $2k$-small above~$\sigma$,
	otherwise, the same argument as for~$A_0$ would give a finite set~$A_1 \subset (y, +\infty)$
	such that~$\Kcal^{X, A_0 \times A_1}$ is $2k$-big above~$\sigma$, contradicting our choice of~$y$.
	So $B_1 = (B \cup \Kcal^{X, A_0 \times (y, +\infty)})$ is $3k$-small above~$\sigma$.
	By the concatenation property, there exists a string $\tau \succeq \sigma$
	in $\Kcal^{X, A_0^2}$ such that $B_1$ is $3k$-small above~$\tau$. 
	Notice that, if we denote by $\Gamma_\tau$ the set of strings extending~$\tau$,
	\begin{eqnarray*}
	\Kcal^{X, A_0 \times (y, +\infty)} \cap \Gamma_\tau 
		&=& \{ \rho \succeq \tau : (\exists F \subseteq_{fin} \Odd(A_0 \times (y, +\infty)))R_\Kcal^X(\rho, F)\}\\
		&=& \{ \rho \succeq \tau : (\exists F \subseteq_{fin} \Odd((y, +\infty)^2))R_\Kcal^X(\rho, F)\}\\
		&=& \Kcal^{X, (y, +\infty)^2} \cap \Gamma_\tau
	\end{eqnarray*}
	This equality is due to the extra property we imposed to~$R_\Kcal^X$ in Definition~\ref{def:requirement}.
	Therefore $B_2 = B_1 \cup \Kcal^{X, (y, +\infty)^2}$ is $3k$-small above~$\tau$.
	The condition~$(\tau, B_2)$ is an extension settling~$\Kcal^{X, \Gcal}$
	via the second clause with witness~$y$.
\end{itemize}
\end{proof}

\subsubsection{The construction}

At this stage of the proof, we have proven the lemmas necessary
for constructing a function~$f$ d.n.c.\ relative to~$X$
such that~$X \oplus f$ does not compute a solution to~$\Gcal$.
We shall see later that, by considering some carefully chosen requirements,
the property of uniform density of the requirements also holds at level~$X \oplus f$.

To define~$f$, let $\Kcal_0^{X, \Gcal}, \Kcal_1^{X, \Gcal} \dots$ be a list of all the requirements.
Thanks to Theorem~\ref{thm:uniform-to-settling}, we can define by induction
an infinite decreasing sequence of conditions
$c_0 = (\epsilon, B_{DNC}^X) \geq c_1 \geq \dots$ such that for each~$s \in \omega$
\begin{itemize}
	\item[1.] $|\sigma_s| \geq s$ \hfill (by the concatenation property)
	\item[2.] $c_{s+1}$ settles $\Kcal_s^{X, \Gcal}$  \hfill (by Theorem~\ref{thm:uniform-to-settling})
\end{itemize}
where $c_s$ is some $k_s$-roomy condition $(\sigma_s, B_s)$ for $s > 0$.
We define our generic function by~$f = \bigcup_s \sigma_s$.
As $B_{DNC}^X = B_0 \subseteq B_s$ for every~$s \geq 0$, $B_{DNC}^X$ is $k$-small
above every initial segment~$\sigma_s$ of~$f$ for some~$k$, and therefore
$f$ is d.n.c.\ relative to~$X$. As well, by Example~\ref{ex:settles-solution},
$X \oplus f$ does not compute a solution to~$\Gcal$. 

\subsubsection{Forcing uniform density}

We now describe the requirements forcing uniform density at the next level.
To specify a potential requirement at the next level, we need the index
for the relation $R_\Kcal^{X \oplus f}$, defining the requirement
$\Kcal^{X \oplus f, \Gcal}$. We shall denote this index by~$\Kcal$.
For each such index~$\Kcal$ and each potential condition~$(\xi, \cdot)$ of the forcing~$\Pb^{X \oplus f}$
we will associate a requirement~$\Tcal_{\Kcal, \xi, r}^X$ at the level~$X$
ensuring that~$\Kcal^{X \oplus f, \Gcal}$ will satisfy uniform density in
the case where~$(\xi, \cdot)$ is an~$r$-roomy condition.

The requirement~$\Tcal_{\Kcal, \xi, r}^X$ consists of all
strings~$\tau \in \omega^{<\omega}$ such that there exists
a finite tree $T \subseteq h^{<\omega}$ which is $r$-bushy below~$\xi$
(and whose code is bounded by $|\tau|$ to ensure~$X$-computability of~$R_{\Tcal}^X$)
such that for each leaf~$\rho$ in~$T$, $(\exists F \subseteq_{fin} \Odd(\Gcal))R_\Kcal^{X \oplus \tau}(\rho, F)$.
Note that this is where we need the fact that~$F$ is a finite subset of $\Odd(\Gcal)$
and not simply a pair in~$\Odd(\Gcal)$.

\begin{lemma}
Let $f = \bigcup_s \sigma_s$ be a generic function defined by a sequence of conditions $(\sigma_s, B_s)$.
Each requirement $\Kcal^{X \oplus f, \Gcal}$ is uniformly dense.
\end{lemma}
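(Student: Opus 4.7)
The plan is to leverage the fact that each requirement $\Tcal_{\Kcal, \xi, r}^X$ occurs in the enumeration $\Kcal_0^{X,\Gcal}, \Kcal_1^{X,\Gcal}, \ldots$ used to build $f$, so every such $\Tcal$ is settled by some pair $(\sigma_{s+1}, B_{s+1})$ of the generic sequence. Fix an index $\Kcal$ and an $r$-roomy condition $(\xi, D) \in \Pb^{X \oplus f}$ below which $\Kcal^{X \oplus f, \Gcal}$ is essential; my task is to exhibit a finite tree $r$-bushy above $\xi$ whose leaves all lie in $\Kcal^{X \oplus f, \Gcal}$. I take the stage $s$ at which $\Tcal_{\Kcal, \xi, r}^X$ is settled in the construction of $f$, and split on the two clauses of the settling definition.

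If the first clause applies, so that $\sigma_{s+1} \in \Tcal_{\Kcal, \xi, r}^X$, the very definition of $\Tcal_{\Kcal, \xi, r}^X$ hands me a finite tree $T$ which is $r$-bushy above $\xi$, each of whose leaves $\rho$ carries a finite $F \subseteq_{fin} \Odd(\Gcal)$ with $R_\Kcal^{X \oplus \sigma_{s+1}}(\rho, F)$. Since $\sigma_{s+1} \prec f$ and the computation $R_\Kcal^{X \oplus \sigma_{s+1}}(\rho, F)$ uses only finitely much oracle, the same $F$ witnesses $R_\Kcal^{X \oplus f}(\rho, F)$, so every leaf of $T$ lies in $\Kcal^{X \oplus f, \Gcal}$ and $r$-bigness is immediate. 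The substantive case is the other one: some $x \in \omega$ satisfies $\Tcal_{\Kcal, \xi, r}^{X, (x, +\infty)^2} \subseteq B_{s+1}$, and here I will derive a contradiction from essentiality.

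Apply the essentiality hypothesis at this $x$ to obtain a finite $A_0 > x$ and, taking $y = x$, a finite $A_1 > x$ with $\Kcal^{X \oplus f, A_0 \times A_1}$ being $2r$-big (hence $r$-big) above $\xi$; because $A_0 \times A_1 \subseteq (x, +\infty)^2$, the witnessing $r$-bushy tree $T'$ has each leaf $\rho$ in $\Kcal^{X \oplus f, (x, +\infty)^2}$, i.e., satisfying $R_\Kcal^{X \oplus f}(\rho, F_\rho)$ for some $F_\rho \subseteq_{fin} \Odd((x, +\infty)^2)$. Using use-bound continuity of $R_\Kcal$, pick $t \geq s+1$ large enough that every such $R_\Kcal^{X \oplus f}(\rho, F_\rho)$ already converges with oracle $X \oplus \sigma_t$ and that $|\sigma_t|$ bounds the code of $T'$. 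Then $T'$ together with the $F_\rho$ witnesses $\sigma_t \in \Tcal_{\Kcal, \xi, r}^{X, (x, +\infty)^2} \subseteq B_{s+1} \subseteq B_t$; but $B_t$ is $k_t$-closed and $k_t$-small above $\sigma_t$, forcing $\sigma_t \notin B_t$, a contradiction.

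The main obstacle will be keeping the two layers of parameters synchronized, namely the roominess coefficient $r$ of the next-level condition $(\xi, D)$ against the bushiness and smallness constants used at the ground level, together with the doubling slack built into the definition of \emph{essential}, so that a $2r$-big witness at $X \oplus f$ genuinely pulls back to an $r$-bushy tree $T'$ whose code can be made to fit inside $|\sigma_t|$ by choosing $t$ large. Everything else is a routine unwinding of the definitions of settling, essentiality, and uniform density together with the observation that a converging oracle computation stays converging under oracle extension.
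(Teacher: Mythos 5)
Your proposal is correct and follows essentially the same route as the paper's own proof: settle $\Tcal_{\Kcal,\xi,r}^{X,\Gcal}$ at some stage of the generic sequence, split on the two clauses of the settling relation, pull back the $r$-bushy tree via the use property in the first case, and in the second case extract a finite $A_0 \times A_1 \subseteq (x,+\infty)^2$ from essentiality, wait for its leaves' oracle computations to stabilize inside $\sigma_t$, and contradict the smallness of $B_t$. The only cosmetic differences are that you index the settling condition by $s+1$ rather than $s$ and are more explicit about the $2r$-versus-$r$ slack coming from the definition of \emph{essential}; the argument itself is identical.
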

\begin{proof}
Fix a requirement $\Kcal^{X \oplus f, \Gcal}$ and an $r$-roomy condition~$(\xi, D)$ such that
$\Kcal^{X \oplus f}$ is essential below $(\xi, D)$. 
By property 2.\ of the construction, 
there is a condition~$(\sigma_s, B_s)$ settling~$\Tcal_{\Kcal, \xi, r}^{X, \Gcal}$.
By definition, there are two ways in which~$(\sigma_s, B_s)$ could settle~$\Tcal_{\Kcal, \xi, r}^{X, \Gcal}$.
\begin{itemize}
	\item Case 1: $\sigma_s \in \Tcal_{\Kcal, \xi, r}^{X, \Gcal}$.
	By definition, there exists a finite tree~$T \subseteq h^{<\omega}$ which is $r$-bushy
	below~$\xi$ and such that for each leaf~$\rho$ in~$T$,
	$(\exists F \subseteq_{fin} \Odd(\Gcal))R_\Kcal^{X \oplus \sigma_s}(\rho, F)$.
	In other words, $\Kcal^{X \oplus \sigma_s, \Gcal}$ is $r$-big above~$\xi$.
	Therefore, by the use property, $\Kcal^{X \oplus f, \Gcal}$ is $r$-big above~$\xi$,
	hence $\Kcal^{X \oplus f, \Gcal}$ is uniformly dense above~$(\xi, \cdot)$.

	\item Case 2: there exists an~$x \in \omega$ such that $\Tcal_{\Kcal, \xi, r}^{X, (x, +\infty)^2} \subseteq B_s$.
	Because $\Kcal^{X \oplus f}$ is essential below $(\xi, D)$, there exists a finite set $A_0 > x$
	and a finite set $A_1 > A_0$ such that~$\Kcal^{X \oplus f, A_0 \times A_1}$
	is $r$-big above~$\xi$. Let $T \subseteq h^{<\omega}$ be a finite tree $r$-bushy
	above~$\xi$ witnessing this fact. Let~$(\sigma_t, B_t)$ be an extension of~$(\sigma_s, B_s)$
	such that~$|\sigma_t|$ is greater than the code of~$T$ and~$\sigma_t$ is large enough
	to witness that for each leaf~$\rho \in T$, $(\exists F \subseteq_{fin} \Odd(\Gcal))R_\Kcal^{X \oplus \sigma_t}(\rho, F)$.
	Such~$\sigma_t$ must exist by the use property.
	By definition, $\sigma_t \in \Tcal_{\Kcal, \xi, r}^{X, (x, +\infty)^2}$,
	but $\Tcal_{\Kcal, \xi, r}^{X, (x, +\infty)^2} \subseteq B_s \subseteq B_t$,
	contradicting the fact that $B_t$ is $k$-small above~$\sigma_t$ for some~$k$.
\end{itemize}
\end{proof}

This last lemma finishes the iteration forcing as it enables
to prove that the property of uniform density is propagated to level~$X \oplus f$.

\subsection{Ground construction}

It remains to construct an infinite, computable, locally 2-colorable graph~$\Gcal$ such that
\begin{itemize}
	\item $\Gcal$ has no computable infinite homogeneous set
	\item Each requirement $\Kcal^{\emptyset, \Gcal}$ is uniformly dense
\end{itemize}

The construction is very similar to the measure-defeating argument
used in~\cite{Bienvenu2014Ramsey} for separating $\wwkl$ from $\rcolor_2$
over~$\omega$-models.
The construction is done by a finite injury priority argument. The resulting
graph will be composed of connected components of finite size.
Each strategy will put restraints on finitely many connected components, each of finite size,
therefore each strategy will have cofinitely many vertices not restrained by strategies of higher priority
to work with.
In order to make the graph computable, the edges over domain~$\{0, \dots, s\}$ must be decided before stage~$s$.
We build a graph satisfying two kind of requirements.

\subsubsection{Forcing~$\Gcal$ not to have computable solutions}

The first class of requirements $\Rcal_e$ ensures that $\Phi_e$ does not compute an infinite $\Gcal$-homogeneous set.
$$
\Rcal_e : \Phi_e \mbox{ infinite } \imp (\exists \{x, y\} \in \Odd(\Gcal)) \Phi_e(x) = \Phi_e(y) = 1
$$

The strategy for satisfying the requirement~$\Rcal_e$ consists in waiting
until~$\Phi_e$ halts on two values~$x$ and~$y$ which are \emph{not} in connected components
restrained by strategies of higher priority, and are not in the same connected components.
Assuming that each strategy acts finitely often and each connected component is of finite size,
if~$\Phi_e$ is infinite, there must be such a pair~$\{x, y\}$ appearing at some finite stage~$s$.
At this stage, the strategy picks two fresh vertices~$z_1, z_2$ which have not appeared
yet in the construction, and adds the edges $\{x, z_1\}, \{z_1, z_2\}, \{z_2, y\}$ in~$\Gcal$.
Having done that, the strategy puts a restraint to all vertices in the connected component of~$x$ and~$y$
and is declared satisfied. The edges added ensure that~$\{x, y\} \in \Odd(\Gcal)$. As the graph is computable,
once a strategy is satisfied, it is never injured.

\subsubsection{Forcing uniform density}

The second class of requirements $\Scal_{\Kcal, \sigma, k}$ states that 
if~$\Kcal$ is a potential requirement,
if $h(|\sigma|) \geq 4k$
and if for every~$x \in \omega$, there exists a finite set $A_0 > x$ such that for every $y \in \omega$,
there exists a finite set $A_1 > y$ such that~$\Kcal^{\emptyset, A_0 \times A_1}$ is $2k$-big above~$\sigma$,
then the set~$\Kcal^{\emptyset, \Gcal}$ is $k$-big above~$\sigma$.

The strategy for satisfying the requirement~$\Scal_{\Kcal, \sigma, k}$ consists in
waiting until it finds two finite sets of vertices~$A_0$ and~$A_1$
such that the vertices of~$A_0$ and vertices of~$A_1$ live in different connected components
and are not restrained by a strategy of higher priority,
together with a finite tree $T \subseteq h^{<\omega}$ which is $2k$-bushy above~$\sigma$
and whose leaves are in~$\Kcal^{\emptyset, A_0 \times A_1}$.

If such pair of sets~$A_0$ and $A_1$ is found,
the strategy takes two fresh vertices~$a$ and~$b$ greater than any vertex
which already appeared in the construction, and adds an edge between them.
It also adds edges between members of~$A_0$ and~$a$ so that they all live in the same connected
component. Now, consider two possible extensions~$G_1, G_2$ of our current graph~$G$,
in which~$G_1$ is some completion where all vertices in~$A_1$ are connected to either~$a$ or~$b$
in a way that the graph remains bipartite, and $G_2$ is obtained from~$G$
by adding for each~$y \in A_1$ an edge $\{a, y\}$ if and only if~$\{b, y\} \in G_1$
and an edge~$\{b, y\}$ if and only if~$\{a, y\} \in G_1$.
Note that~$G_2$ is also bipartite and any pair~$\{x, y\} \in A_0 \times A_1$
is homogeneous for exactly one of~$G_1$ and $G_2$.
By the smallness additivity property, either~$\Kcal^{\emptyset, G_1}$ or~$\Kcal^{\emptyset, G_2}$
is~$k$-big above~$\sigma$, so we can complete the graph~$\Gcal$
so that~$\Kcal^{\emptyset, \Gcal}$ is $k$-big above~$\sigma$.

If no such pair of sets exists, we claim that the requirement is vacuously satisfied.
Suppose it is not and assume that
each connected component is of finite size, and there are finitely many connected components
restrained by strategies of higher priority. Taking~$x$
to be greater than all restrained vertices, there must be a finite set~$A_0 > x$
such that for every~$y$, there exists a finite set~$A_1 > y$ 
such that~$\Kcal^{\emptyset, A_0 \times A_1}$ is $2k$-big above~$\sigma$.

By choice of~$x$, members of~$A_0$ are not restrained by a strategy of higher priority.
The connected components of members of~$A_0$ being finite, taking~$y$ to be greater
than all of those connected components, there must exist a finite set $A_1 > y$
such that~$\Kcal^{\emptyset, A_0 \times A_1}$ is $2k$-big above~$\sigma$.
By choice of~$y$, members of~$A_1$ are in different connected components
than members of~$A_0$ and are not restrained by strategies of higher priority.
By definition of~$2k$-bigness, there exists a finite tree~$T$ which is~$2k$-bushy above~$\sigma$
and whose leaves are in~$\Kcal^{\emptyset, A_0 \times A_1}$. Therefore we are in first case.

\subsubsection{The construction}

The two kinds of requirements are interleaved into a priority ordering.
Notice that as soon as an edge is put between two vertices~$x$ and~$y$,
the corresponding strategy puts a restraint on the whole connected component,
so that no strategy of lower priority can add edges to it.
As each strategy adds finitely many vertices, each connected component
of the resulting graph~$\Gcal$ must be of finite size.
As well, no edge is added between two vertices~$x, y < s$ after stage~$s$,
therefore the graph is computable.

This finishes the ground construction.

\subsection{Putting all together}

Thanks to the ground construction and the iteration forcing,
we can define an infinite increasing sequence of sets~$X_0 = \emptyset \leq_T X_1 \leq_T \dots$
such that for each~$i \in \omega$
\begin{itemize}
	\item $X_{i+1}$ computes a function d.n.c.\ relative to~$X_i$
	\item $X_i$ does not compute a solution to~$\Gcal$
\end{itemize}

Let~$\Mcal$ be $\omega$-structure whose second-order part is the Turing ideal
generated by the sequence~$X_0, X_1, \dots$
We first check that~$\Mcal$ is a model of~$\dnrs{h}$.
Fix a set~$Y \in \Mcal$. There exists an~$i$ such that~$Y \leq_T X_i$,
and so such that~$X_{i+1}$ computes an $h$-bounded function~$f$ d.n.c.\ relative to~$Y$.
By construction of~$\Mcal$, $X_{i+1} \in \Mcal$ and so~$f \in \Mcal$.
Therefore~$\Mcal$ is a model of~$\dnrs{h}$.
There remains to check that~$\Mcal$ is not a model of~$\rcolor_2$.
As~$\Gcal$ is computable, $\Gcal \in \Mcal$.
For every set~$Y \in \Mcal$, there exists an~$i$ such that $Y \leq_T X_i$
and therefore~$Y$ is not a solution to~$\Gcal$. Hence~$\Gcal$
has no solution in~$\Mcal$ and thus is not a model of~$\rcolor_2$.

\vspace{0.5cm}

\noindent \textbf{Acknowledgements}. The author is grateful to Manuel Lerman, Reed Solomon
and Henry Towsner for kindly answering the author's questions about their forcing techniques.
The author is also thankful to his PhD advisor Laurent Bienvenu for useful comments and discussions.

\vspace{0.5cm}

\end{document}